\theoremstyle{plain}
\newtheorem{propn}{Proposition}[section]
\newtheorem{thm}[propn]{Theorem}
\newtheorem{lemma}[propn]{Lemma}
\newtheorem{cor}[propn]{Corollary}
\newtheorem*{thm*}{Theorem}
\theoremstyle{definition}
\theoremstyle{remark}
\newcommand{\Hil}{\mathcal{H}}
\newcommand{\Kil}{\mathcal{K}}
\newcommand{\norm}[1]{\lVert #1 \rVert}
\newcommand{\s}{\mathcal{S}}
\newcommand{\D}{\mathbb{D}}
\newcommand{\W}{\mathcal{W}}
\newcommand{\Nat}{\mathbb{N}}
\newcommand{\Comp}{\mathbb{C}}
\newcommand{\ot}{\otimes}
 \DeclareMathOperator{\Ker}{Ker}
 \DeclareMathOperator{\R}{Re}
\begin{document}
\title[Wandering subspaces of the Bergman space
 and the Dirichlet space over $\mathbb{D}^n$]
 {Wandering subspaces of the Bergman space and the Dirichlet space over polydisc}
\author[Chattopadhyay] {A. Chattopadhyay }

\address{
(A. Chattopadhyay) Indian Statistical Institute \\ Statistics and
Mathematics Unit \\ 8th Mile, Mysore Road \\ Bangalore \\ 560059 \\
India}

\email{arup@isibang.ac.in, 2003arupchattopadhyay@gmail.com}

\author[Das] {B. Krishna Das}

\address{
(B. K. Das) Indian Statistical Institute \\ Statistics and
Mathematics Unit \\ 8th Mile, Mysore Road \\ Bangalore \\ 560059 \\
India}

\email{dasb@isibang.ac.in, bata436@gmail.com}

\author[Sarkar]{Jaydeb Sarkar}

\address{
(J. Sarkar) Indian Statistical Institute \\ Statistics and
Mathematics Unit \\ 8th Mile, Mysore Road \\ Bangalore \\ 560059 \\
India}

\email{jay@isibang.ac.in, jaydeb@gmail.com}

\author[Sarkar]{S. Sarkar}

\address{
(S. Sarkar)  Department of Mathematics\\
  Indian Institute of Science\\
  Bangalore\\
  560 012\\
  India}

\email{santanu@math.iisc.ernet.in}

\subjclass[2010]{47A13, 47A15, 47A20, 47L99} \keywords{Invariant
subspace, Beurling's theorem, Bergman space, Dirichlet space, Hardy
space, Doubly commutativity}

 \begin{abstract}
 Doubly commutativity of invariant subspaces of the
 Bergman space and the Dirichlet space over the unit polydisc $\mathbb{D}^n$ (with $ n \geq 2$) is investigated.
 We show that for any non-empty subset $\alpha=\{\alpha_1,\dots,\alpha_k\}$
 of $\{1,\dots,n\}$ and doubly commuting invariant subspace $\s$ of
 the Bergman space or the Dirichlet space over $\D^n$, the tuple consists
 of restrictions of co-ordinate multiplication operators
 $M_{\alpha}|_\s:=(M_{z_{\alpha_1}}|_\s,\dots, M_{z_{\alpha_k}}|_\s)$
 always possesses wandering subspace of the form
 \[\bigcap_{i=1}^k(\s\ominus z_{\alpha_i}\s).
  \]
\end{abstract}

\maketitle

\section{Introduction}
A closed subspace $\W$ of a Hilbert space $\Hil$ is said to be
\emph{wandering subspace} (following Halmos \cite{H}) for an
$n$-tuple of commuting bounded linear operators $T=(T_1,\dots,T_n)$
($n \geq 1$) if
  \[\W\perp
 T_{1} ^{l_1} T_{2} ^{l_2}\cdots T_{n} ^{l_n} \W
 \]
 for all $(l_1,\dots,l_n)\in \mathbb{N}^n\setminus \{(0,\dots, 0)\}$
 and \[\mathcal{H} = \overline{\mbox{span}}\{ T_{1} ^{l_1} T_{2} ^{l_2}\cdots T_{n}
 ^{l_n} h : h \in \W, l_1, \ldots, l_n \in \mathbb{N}\}.\]In this case, the tuple $T$ is said to have the \textit{wandering subspace
 property}.

The main purpose of this paper is to investigate the following
question:

\noindent \textsf{Question:} Let $(T_{1}, \ldots, T_{n})$ be a
commuting $n$-tuple of bounded linear operators on a Hilbert space
$\mathcal{H}$. Does there exists a
 wandering subspace $\W$ for $(T_{1}, \ldots, T_{n})$?

This question has an affirmative answer for the restriction of
multiplication operator by the coordinate function $M_z$, to an
invariant subspace of the Hardy space $H^2(\mathbb{D})$ (Beurlings
theorem \cite{AB}) or the Bergman space $A^2(\mathbb{D})$ (Aleman,
Richter and Sundberg \cite{ARS}) or the Dirichlet space
$\mathcal{D}(\mathbb{D})$ (Richter \cite{SR} ) over the unit
disc $\D$ of the complex plane $\mathbb{C}$. For $n \geq 2$,
existence of wandering subspaces for general invariant subspaces
of the Hardy space $H^2(\mathbb{D}^n)$ over the unit polydisc
$\mathbb{D}^n$ rather fails spectacularly (cf. \cite{RU}).

Recall that the Hardy space over the unit polydisc $\mathbb{D}^n =
\{ \bm{z} = (z_1, \ldots, z_n) \in \mathbb{C}^n : |z_i| < 1, i = 1,
\ldots, n\}$ is denoted by $H^2(\D^n)$ and defined by
 $$H^2(\mathbb{D}^n)  = \{  f \in \mathcal{O} (\mathbb{D}^n) :
 \mathop{\mbox{sup}}_{0 \leq r < 1}
\mathop{\int}_{\mathbb{T}^n} |f(r \bm{z})|^2 d\bm{\theta}
 < \infty\},$$where $d\bm{\theta}$ is
the normalized Lebesgue measure on the torus $\mathbb{T}^n$, the
distinguished boundary of $\mathbb{D}^n$, $r\bm{z} : = (rz_1,
\ldots, r z_n)$ and $\mathcal{O}(\D^n)$ denotes the set of all
holomorphic functions on $\D^n$ (cf. \cite{RU}).

In \cite{AB}, A. Beurling characterize all closed $M_z$-invariant
subspaces of $H^2(\D)$ in the following sense: Let $\s \neq \{0\}$
be a closed subspace
 of $H^{2}(\mathbb{D}).$ Then $\s$ is $M_z$-invariant if and only if
 $\s = \theta H^{2}(\mathbb{D})$ for some inner function $\theta$ (that is, $\theta \in
 H^\infty(\mathbb{D})$ and $|\theta| = 1$ a.e. on the unit circle $\mathbb{T}$).
In particular, Beurlings theorem yields the wandering subspace
property for $M_z$-invariant subspaces of $H^2(\mathbb{D})$ as
follows: if $\s = \theta H^2(\mathbb{D})$ is an $M_z$-invariant
subspace of $H^2(\mathbb{D})$ then
\begin{equation}\label{1-S}\s = \mathop{\sum}_{m \geq 0} \oplus z^m
\W,\end{equation}where $\W$ is the wandering subspace for $M_z|_\s$
given by
\[\W = \s \ominus z \s = \theta H^2(\mathbb{D}) \ominus z \theta
H^2(\mathbb{D}) = \theta \mathbb{C}.\]

In \cite{RU}, W. Rudin showed that there are invariant subspaces
$\mathcal{M}$ of $H^2(\mathbb{D}^2)$ which does not contain any
bounded analytic function. In particular, the Beurling like
characterization of $(M_{z_1}, \ldots, M_{z_n})$-invariant subspaces
of $H^2(\mathbb{D}^n)$, in terms of inner functions on
$\mathbb{D}^n$ is not possible.

On the other hand, the wandering subspace (and the Beurlings
theorem) for the shift invariant subspaces of the Hardy space
$H^2(\mathbb{D})$ follows directly from the classical Wold
decomposition \cite{W} theorem for isometries. Indeed, for a closed
$M_z$-invariant subspace $\s (\neq \{0\})$ of $H^2(\mathbb{D})$,
\[\mathop{\bigcap}_{m=0}^\infty (M_z|_{\s})^m \s =
\mathop{\bigcap}_{m=0}^\infty M_z^m \s \subseteq
\mathop{\bigcap}_{m=0}^\infty M_z^m H^2(\mathbb{D}) =
\{0\}.\]Consequently, by Wold decomposition theorem for the isometry
$M_{z}|_\s$ on $\s$ we have \[\s = \mathop{\sum}_{m \geq 0} \oplus
z^m \W \oplus (\mathop{\bigcap}_{m=0}^\infty (M_z|_{\s})^m \s) =
\mathop{\sum}_{m \geq 0} \oplus z^m \W,\]where $\W = \s \ominus z
\s$ and hence, (\ref{1-S}) follows. Therefore, the notion of
wandering subspaces is stronger (as well as of independent interest)
than the Beurlings characterization of $M_z$-invariant subspaces of
$H^2(\mathbb{D})$.

To proceed, we now recall the following definition: a commuting
$n$-tuple ($n \geq 2$) of bounded linear operators $(T_1, \ldots,
T_n)$ on $\mathcal{H}$ is said to be \textit{doubly commuting} if
$T_i T_j^* = T_j^* T_i$ for all $1 \leq i < j \leq n$.

Natural examples of doubly commuting tuple of operators are the
multiplication operators by the coordinate functions acting on the
Hardy space or the Bergman space or the Dirichlet space over the unit
polydisc $\mathbb{D}^n$ ($n \geq 2$).

Let $\mathcal{H} \subseteq \mathcal{O}(\mathbb{D}^n)$ be a
reproducing kernel Hilbert space over $\mathbb{D}^n$ (see \cite{Ar})
and multiplication operators $\{M_{z_1}, \ldots, M_{z_n}\}$ by
 coordinate functions are bounded. Then a closed $(M_{z_1},
\ldots, M_{z_n})$-invariant subspace $\s$ of $\mathcal{H}$ is said
to be doubly commuting if the $n$-tuple $(M_{z_1}|_\s, \ldots,
M_{z_n}|_\s)$ is doubly commuting, that is, $R_iR_j^*=R_j^*R_i$ for
all $1 \leq i < j \leq n$, where $R_i=M_{z_i}|_{\s}$.

In \cite{SSW}, third author and Sasane and Wick proved that any
doubly commuting invariant subspace of $H^2(\mathbb{D}^n)$ (where
$n \geq 2$) has the wandering subspace property (see \cite{M} for $n
= 2$). Also in \cite{RT},  Redett and Tung obtained the same
conclusion for doubly commuting invariant subspaces of the
Bergman space $A^2(\mathbb{D}^2)$ over the bidisc $\mathbb{D}^2$.

In this paper we prove that doubly commuting invariant subspaces
of the Bergman space $A^2(\mathbb{D}^n)$ and the Dirichlet space
$\mathcal{D}(\mathbb{D}^n)$ has the wandering subspace property. Our
result on the Bergman space over polydisc is a generalization of the
base case $n = 2$ in \cite{RT}.
Our analysis is based on the Wold-type
decomposition result of S. Shimorin for operators closed to
isometries \cite{SH}.

The paper is organized as follows: in Section 2, we obtain some
general results concerning the wandering subspaces of tuples of
doubly commuting operators. We obtain wandering
subspaces for doubly commuting shift invariant subspaces of the
Bergman and Dirichlet spaces over $\mathbb{D}^n$ in Section 3.

\section{Wandering subspace for tuple of doubly commuting operators}

In this section we prove the multivariate version of the S.
Shimorin's result for tuple of doubly commuting operators on a
general Hilbert space. We show that for a tuple of doubly
 commuting operators $T=(T_1,\dots,T_n)$ on a Hilbert space $\Hil$,
 if for any reducing subspace $\s_i$ of $T_i$
 the subspace $\s_i\ominus T_i\s_i$ is a wandering subspace for $T_i|_{\s_i}$, $i=1,\dots,n$,
  then $\bigcap\limits_{i=1}^n(\Hil\ominus T_{i}\Hil)$ is a
  wandering subspace for $T$.  We \emph{fix for the
 rest of the paper} a natural number $n\geq 2$ and set $\Lambda_n:=\{1,\dots,n\}$.

For a closed subset $\Kil$ of a Hilbert space $\Hil$, an
$n$-tuple of commuting operators
 $T = (T_1,\dots,T_n)$ on $\Hil$ and a non-empty subset $\alpha=
 \{\alpha_1,\dots,\alpha_k\}\subseteq \Lambda_n$,
 we write $[\Kil]_{T_\alpha}$ to denote the smallest
 closed joint $T_{\alpha}:=(T_{\alpha_1},\dots, T_{\alpha_k})$-invariant
 subspace of $\Hil$ containing $\Kil$. In other words

 \begin{equation}
 [\Kil]_{T_\alpha} = \bigvee _{l_1, l_2,\dots, l_k = 0}^{\infty}
 T_{\alpha_1} ^{l_1} T_{\alpha_2} ^{l_2}\cdots T_{\alpha_k} ^{l_k} (\Kil).
 \label{smallest generating set}
 \end{equation} If $\alpha$ is a singleton set $\{i\}$ then we simply write
$[\Kil]_{T_i}$.

For a non-empty subset
$\alpha=\{\alpha_1,\dots,\alpha_k\}\subseteq\Lambda_n$,
 we also denote by $\mathcal{W}_{\alpha}$ the following subspace of $\Hil$,
 \begin{equation}
  \mathcal{W}_{\alpha}=\bigcap\limits_{i=1}^k(\Hil\ominus
  T_{\alpha_i}\Hil).
  \label{wandering subspace}
 \end{equation}
 Again if $\alpha$
 is a singleton set $\{i\}$ then we simply write $\mathcal{W}_i$.
 Thus with the above notation
 $\mathcal{W}_{\alpha}=\bigcap\limits_{\alpha_i\in\alpha}\mathcal{W}_{\alpha_i}$.

 A bounded linear operator $T$ on a Hilbert space $\Hil$ is \emph{analytic}
 if $\bigcap\limits_{n=0}^\infty T^n\Hil=\{0\}$ and it is \emph{concave} if it satisfies
 the following inequality
 \[
  \norm{T^2x}^2+\norm{x}^2\le 2\norm{Tx}^2\quad (x\in\Hil).
 \]Multiplication by coordinate functions on the Dirichlet space over the unit polydisc are
 concave operators as we show in the next section.

  For a single bounded operator $T$ on a Hilbert space $\Hil$,
  the following result ensure the existence of the wandering subspace
  under certain conditions (see \cite{SR}, \cite{SH}).
  \begin{thm} (Richter, Shimorin). Let $T$ be an analytic operator on
  a Hilbert space $\Hil$ satisfies one of the following
 properties:\\
 \textup{(i)} $\parallel T x + y \parallel ^2 \leq 2
(\parallel x \parallel ^2 + \parallel T y \parallel^2 )\ (x,y \in \Hil),$\\
\textup{(ii)} $T$ is concave.\\
 Then $\Hil\ominus T\Hil$ is a wandering subspace for $T$, that is,
 \[\Hil = [\Hil \ominus T\Hil]_T.\]
 \label{shimorin}
 \end{thm}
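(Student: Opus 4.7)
The plan is to split the claim into two independent pieces: (a) the wandering relation $\W \perp T^m \W$ for all $m \geq 1$, and (b) the generating relation $\Hil = \bigvee_{m \geq 0} T^m \W$, where $\W := \Hil \ominus T\Hil$. Part (a) needs no hypothesis beyond the definition of $\W$: for any $m \geq 1$, $T^m \W \subseteq T\Hil$, and $\W \perp T\Hil$ by construction, so $\W \perp T^m \W$. The substance of the theorem lies in part (b).

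For part (b), I would introduce $\mathcal{M} := [\W]_T$ and $\mathcal{N} := \Hil \ominus \mathcal{M}$, and aim to show $\mathcal{N} = \{0\}$. Since $\W \subseteq \mathcal{M}$, every $x \in \mathcal{N}$ is orthogonal to $\W$ and hence lies in $\Ranc T$. The crucial step is to establish a \emph{lifting lemma}: each $x \in \mathcal{N}$ admits a preimage $y \in \mathcal{N}$ with $Ty = x$. Once this is in hand, iteration produces $y_k \in \mathcal{N}$ with $T^k y_k = x$ for every $k \geq 0$, forcing $x \in \bigcap_{k \geq 0} T^k \Hil = \{0\}$ by the analyticity hypothesis. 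Hence $\mathcal{N} = \{0\}$ and $\Hil = \mathcal{M} = [\W]_T$, as required.

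The heart of the proof is thus the lifting lemma, which is precisely where hypothesis (i) or (ii) enters. The natural attempt is: given $x \in \mathcal{N}$, pick $y_k \in \Hil$ with $Ty_k \to x$, split $y_k$ according to the decomposition $\Hil = \mathcal{M} \oplus \mathcal{N}$, and argue that the $\mathcal{N}$-components are Cauchy with a limit that still lies in $\mathcal{N}$. Under (i), this Cauchy estimate is obtained by applying the companion inequality $\norm{Ta + b}^2 \le 2(\norm{a}^2 + \norm{Tb}^2)$ to differences $a = y_k - y_l$ paired with suitable vectors $b \in \W$, exploiting that the orthogonal splitting annihilates the cross-terms. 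Under (ii), concavity yields the polynomial growth estimate $\norm{T^n y}^2 \le n\norm{Ty}^2 - (n-1)\norm{y}^2$, which provides the analogous control and can also be fed back into (i) by a routine reduction. The main obstacle, and the genuine content of Shimorin's contribution, is exactly this: naive preimage sequences in $\Hil$ need not converge, and one needs the companion inequality to upgrade norm-boundedness of $Ty_k$ to convergence of the $\mathcal{N}$-components of $y_k$.
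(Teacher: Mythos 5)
You should first note that the paper does not prove this theorem at all: it is quoted as known, with references to Richter \cite{SR} (the concave case) and Shimorin \cite{SH} (both cases), and the text around it only points to Theorem 3.6 of \cite{SH}. So the comparison here is against those sources rather than against an in-paper argument. Your reduction is sound as far as it goes: the orthogonality $\W \perp T^m\W$ is indeed immediate from $T^m\W \subseteq T\Hil$, and if every $x \in \mathcal{N} := \Hil \ominus [\W]_T$ had a $T$-preimage again lying in $\mathcal{N}$, then iteration plus analyticity would give $\mathcal{N} = \{0\}$. This skeleton matches the known proofs in outline.

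The gap is that your ``lifting lemma'' is essentially the whole theorem, and the mechanism you propose for it is aimed at the wrong difficulty. Under either (i) (set $x=0$ to get $\norm{y}^2 \le 2\norm{Ty}^2$) or (ii) (rearrange to $\norm{Ty}^2 \ge \tfrac{1}{2}(\norm{T^2y}^2+\norm{y}^2) \ge \tfrac{1}{2}\norm{y}^2$), the operator $T$ is bounded below, so $T\Hil$ is closed, $\Hil = \W \oplus T\Hil$, and every $x \perp \W$ has a \emph{unique, exact} preimage $y = Lx$ where $L$ is the left inverse with $\Ker L = \W$. There are no approximate preimage sequences whose convergence needs to be forced; the genuine obstacle is to show that this exact preimage has vanishing $\W$-component, i.e.\ $P_{\W}Lx = 0$ when $x \in \mathcal{N}$. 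Writing $y = P_\W y + TLy$ and pairing $x = Ty$ against $TP_\W y \in [\W]_T$ gives $0 = \norm{TP_\W y}^2 + \langle T^2Ly, TP_\W y\rangle$; for an isometry the cross term dies and one concludes, but under (i) or (ii) it does not, and no pairing ``with suitable vectors $b \in \W$'' in the companion inequality is known to kill it. Shimorin's actual proof goes through the Cauchy dual $T' = T(T^*T)^{-1}$, the identity $\Hil \ominus [\W]_T = \bigcap_{m\ge 1}(T')^m\Hil$, and a delicate argument showing that (i) forces $\bigcap_m (T')^m\Hil \subseteq \bigcap_m T^m\Hil$; Richter's concave case is likewise a substantial separate argument, and your claim that concavity ``can be fed back into (i) by a routine reduction'' is not substantiated (Shimorin treats the two hypotheses by distinct, if related, arguments). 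As written, the proposal restates the theorem as a lemma and gestures at a proof strategy that does not engage the actual difficulty.
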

 The
 following proposition is essential in order to generalize
 the above result for certain tuples of commuting operators.

 \begin{propn}\label{reducing}
  Let $T=(T_1,\dots,T_n)$ be a doubly commuting tuple of operators
  on $\Hil$. Then
  $\W_{\alpha}$ is $T_j$-reducing subspace for all non-empty subset
  $\alpha\subseteq\Lambda_n$ and $j\notin \alpha$, where $\W_\alpha$
  is as in~\eqref{wandering subspace}.
  \end{propn}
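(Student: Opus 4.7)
The plan is to reduce the statement to a pair of short commutator identities by first recognizing each factor $\Hil \ominus T_{\alpha_i}\Hil$ as a kernel. Since $\Hil \ominus T_{\alpha_i}\Hil = (\Ran T_{\alpha_i})^{\perp} = \Ker T_{\alpha_i}^{*}$, we may rewrite
\[
\W_{\alpha} \;=\; \bigcap_{i=1}^{k} \Ker T_{\alpha_i}^{*}.
\]
This description converts the reducing property we want into the purely algebraic statement that $\Ker T_{\alpha_i}^{*}$ is preserved by both $T_j$ and $T_j^{*}$ for every $\alpha_i\in\alpha$ and every $j\notin\alpha$.

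I would then verify the two invariances separately. Fix $x \in \W_{\alpha}$, so that $T_{\alpha_i}^{*} x = 0$ for each $i=1,\dots,k$, and fix $j\notin\alpha$. For invariance under $T_j$: since $j\neq \alpha_i$, the double commutativity hypothesis gives $T_{\alpha_i}^{*} T_j = T_j T_{\alpha_i}^{*}$, hence
\[
T_{\alpha_i}^{*}(T_j x) \;=\; T_j T_{\alpha_i}^{*} x \;=\; 0 \qquad (i=1,\dots,k),
\]
so $T_j x \in \W_{\alpha}$. For invariance under $T_j^{*}$: the ordinary commutation $T_{\alpha_i} T_j = T_j T_{\alpha_i}$ (still valid since $j\neq \alpha_i$) yields $T_{\alpha_i}^{*} T_j^{*} = T_j^{*} T_{\alpha_i}^{*}$ by taking adjoints, and the same computation gives $T_j^{*} x \in \W_{\alpha}$. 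Combining the two inclusions, $\W_{\alpha}$ is $T_j$-reducing.

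There is no substantive obstacle here: the content of the proposition is entirely captured by the remark that the kernel of $T_{\alpha_i}^{*}$ is stable under any operator that commutes with $T_{\alpha_i}^{*}$, and double commutativity supplies exactly this for the $T_j$ with $j\notin\alpha$ (while ordinary commutativity supplies it for $T_j^{*}$). The only thing to be careful about is to use the hypothesis $j\notin\alpha$ so that the index $j$ is genuinely distinct from every $\alpha_i$, which is what allows us to invoke the $T_i T_j^{*} = T_j^{*} T_i$ relation rather than the trivial $i=j$ case it excludes.
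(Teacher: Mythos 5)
Your proposal is correct and follows essentially the same route as the paper: both identify $\W_{\alpha}$ with $\bigcap_{i}\Ker T_{\alpha_i}^{*}$ and then apply the commutation relations $T_{\alpha_i}^{*}T_j = T_jT_{\alpha_i}^{*}$ (from double commutativity) and $T_{\alpha_i}^{*}T_j^{*} = T_j^{*}T_{\alpha_i}^{*}$ (from ordinary commutativity). The paper phrases the verification via inner products $\langle T_jx, T_{\alpha_i}y\rangle$ while you apply $T_{\alpha_i}^{*}$ directly to kernel elements, but this is only a cosmetic difference.
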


  \begin{proof}
  Let $\alpha=\{\alpha_1,\dots,\alpha_k\}$ be a
  non-empty subset of $\Lambda_n$ and $j\notin \alpha$.
  First note that $\mathcal{W}_{l}=\Ker T_{l}^*$ for all $1\le l\le n$ and therefore
 $\mathcal{W}_{\alpha}=\bigcap\limits_{i=1}^k \Ker T_{\alpha_i}^*$.
 Let $x \in \mathcal{W}_{\alpha}$, $\alpha_i\in \alpha$ and $y\in\Hil$.
 By doubly commutativity of $T$ we have,
  \begin{align*} \langle T_j x , T_{\alpha_i} y\rangle& = \langle
 T_{\alpha_i}^* T_j x ,y \rangle =\langle T_j T_{\alpha_i}^*x , y\rangle = 0.
  \end{align*}
  Therefore, $T_j \mathcal{W}_{\alpha} \perp T_{\alpha_i}\Hil$ and hence $T_j \mathcal{W}_{\alpha}
  \subseteq \mathcal{W}_{\alpha_i}$
  for all $\alpha_i\in\alpha$. Thus $\mathcal{W}_{\alpha}$ is an invariant subspace
  for $T_j$.
  Also, by commutativity of $T$ we have \begin{align*}
  \langle T_j^* x , T_{\alpha_i} y\rangle = \langle T_{\alpha_i}^*T_j^* x ,y \rangle
  =\langle T_j^* T_{\alpha_i}^*x , y\rangle =
  0,
  \end{align*}for all $\alpha_i\in\alpha$ and $y\in\Hil$.
  This implies $T_j^*\mathcal{W}_{\alpha} \perp T_{\alpha_i}\Hil$ and then
  $T_j^*\mathcal{W}_{\alpha}\subseteq \W_{\alpha_i}$ for all $\alpha_i\in\alpha$ .
  This completes the proof.
  \end{proof}
   Now we prove the main theorem in the general Hilbert space operator
  setting. Below for a set $\alpha$, we denote by $\#\alpha$
  the cardinality of $\alpha$.
  \begin{thm}
  \label{main}
Let $T = (T_1,\dots,T_n)$ be a doubly commuting tuple of operators
on $\Hil$ such that for any reducing subspace $\s_i$ of $T_i$, the 
subspace $$\s_i\ominus T_i\s_i$$ is a wandering subspace for $T_i|_{\s_i}$,
$i=1,\dots,n$. Then for each non-empty subset
$\alpha=\{\alpha_1,\dots,\alpha_k\}\subseteq \Lambda_n$, the tuple
$T_{\alpha}=(T_{\alpha_1},\dots,T_{\alpha_k})$ has
  the wandering subspace property. Moreover, the corresponding
  wandering subspace is given by
  \[
   \W_{\alpha}=\bigcap_{i=1}^k (\Hil\ominus T_{\alpha_i}\Hil).
  \]
  \end{thm}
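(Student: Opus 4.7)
My plan is to proceed by induction on $k=\#\alpha$. The orthogonality part of the wandering subspace property is immediate from the definition of $\W_\alpha$: if $(l_1,\dots,l_k)\in\Nat^k\setminus\{(0,\dots,0)\}$ has some $l_j\ge 1$, then $T_{\alpha_1}^{l_1}\cdots T_{\alpha_k}^{l_k}\W_\alpha\subseteq T_{\alpha_j}\Hil$, which is orthogonal to $\W_\alpha\subseteq \Ker T_{\alpha_j}^*$. So the real content of the theorem is the density statement $\Hil=[\W_\alpha]_{T_\alpha}$.

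The base case $k=1$ is precisely the hypothesis applied to the reducing subspace $\Hil$ of $T_i$. For the inductive step, assume the conclusion for all subsets of cardinality $k-1$ and write $\alpha'=\{\alpha_1,\dots,\alpha_{k-1}\}$, so that by the induction hypothesis $\Hil=[\W_{\alpha'}]_{T_{\alpha'}}$. The key identity I would then establish is
\[
\W_\alpha=\W_{\alpha'}\ominus T_{\alpha_k}\W_{\alpha'}.
\]
The inclusion $\subseteq$ is immediate since $\W_\alpha\subseteq\W_{\alpha'}$ and $\W_\alpha\perp T_{\alpha_k}\Hil\supseteq T_{\alpha_k}\W_{\alpha'}$. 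For the reverse inclusion, Proposition~\ref{reducing} tells us that $\W_{\alpha'}$ is a reducing subspace for $T_{\alpha_k}$; hence if $x\in\W_{\alpha'}$ is orthogonal to $T_{\alpha_k}\W_{\alpha'}$, then $T_{\alpha_k}^* x\in\W_{\alpha'}$, so $\langle T_{\alpha_k}^*x,T_{\alpha_k}^*x\rangle=\langle x, T_{\alpha_k}T_{\alpha_k}^*x\rangle=0$, forcing $T_{\alpha_k}^*x=0$ and $x\in\W_\alpha$.

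Once this identity is in hand, the hypothesis applied to the reducing subspace $\W_{\alpha'}$ of $T_{\alpha_k}$ yields $\W_{\alpha'}=[\W_\alpha]_{T_{\alpha_k}}$, whence
\[
\Hil=[\W_{\alpha'}]_{T_{\alpha'}}=\bigl[[\W_\alpha]_{T_{\alpha_k}}\bigr]_{T_{\alpha'}}=[\W_\alpha]_{T_\alpha},
\]
completing the induction. The delicate point is the use of Proposition~\ref{reducing}: it is what legitimizes invoking the single-operator wandering hypothesis on the non-trivial reducing subspace $\W_{\alpha'}$ rather than just on $\Hil$. This is precisely why the hypothesis is stated not merely for $\Hil$ itself but for all reducing subspaces of the $T_i$; once this bridge is set up, the induction runs cleanly.
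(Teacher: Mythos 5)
Your proof is correct and follows essentially the same route as the paper: both hinge on the identity $\W_{\alpha}=\W_{\alpha'}\ominus T_{\alpha_k}\W_{\alpha'}$, justified via Proposition~\ref{reducing}, and then invoke the hypothesis on the reducing subspace $\W_{\alpha'}$ to get $[\W_{\alpha}]_{T_{\alpha_k}}=\W_{\alpha'}$ before iterating. Your packaging as a formal induction on $\#\alpha$ is only a cosmetic difference from the paper's ``peel off one index at a time'' phrasing.
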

 \begin{proof}
  First note that we only need to show
 $\Hil = [\mathcal{W}_{\alpha}]_{T_{\alpha}}$ for any non-empty subset
 $\alpha$ of $\Lambda_n$ as the orthogonal property for wandering
 subspace is immediate. Now for a singleton set $\alpha$ the result follows form the
 assumption that $\W_i$ is a wandering subspace for $T_i$, $i=1,\dots,n$. Now for $\#\alpha\ge 2$, it suffices to show that
 $[\mathcal{W}_{\alpha}]_{T_{\alpha_i}}=\mathcal{W}_{\alpha\setminus \{\alpha_i\}}$
 for any $\alpha_i\in\alpha$.
 Because for $\alpha_i,\alpha_j\in\alpha$, one can repeat the procedure to get
 $$[\mathcal{W}_{\alpha}]_{T_{\{\alpha_i,\alpha_j\}}}=
 [[\mathcal{W}_{\alpha}]_{T_{\alpha_i}}]_{T_{\alpha_j}}=
  \mathcal{W}_{\alpha\setminus\{\alpha_i,\alpha_j\}},$$
 and continue this process until the set $\alpha\setminus\{\alpha_i,\alpha_j\}$
 becomes a singleton set and finally apply the assumption for singleton set.

  To this end, let $\alpha\subseteq\Lambda_n, \#\alpha\ge 2$
  and $\alpha_i\in\alpha$.
  Consider the set
  $F=\mathcal{W}_{\alpha\setminus\{\alpha_i\}}
  \ominus T_{\alpha_i}(\mathcal{W}_{\alpha\setminus\{\alpha_i\}})$.
  Now by Proposition~\ref{reducing}, $\mathcal{W}_{\alpha\setminus \{\alpha_i\}}$
  is a reducing subspace for
  $T_{\alpha_i}$ and therefore
  \begin{align*}
 F&=\{x\in \mathcal{W}_{\alpha\setminus \{\alpha_i\}}: x\in\Ker T_{\alpha_i}^*\}\\
 &=\W_{\alpha\setminus\{\alpha_i\}}\cap\W_{\alpha_i}\\
  &=\mathcal{W}_{\alpha}.
  \end{align*}
  On the other hand, since $\W_{\alpha\setminus\{\alpha_i\}}$ is a reducing 
  subspace for $T_{\alpha_i}$ then by assumption $F=\W_{\alpha}$ is a wandering 
  subspace for $T_{\alpha_i}$. Thus 
  $$[\W_{\alpha}]_{T_{\alpha_i}}=\W_{\alpha\setminus\{\alpha_i\}}.$$
  This completes the proof.
 \end{proof}
  Combining Theorem~\ref{shimorin} with the above theorem we have the following
  result, which is the case of our main interest.
 \begin{cor}
 \label{main2}
  Let $T=(T_1,\dots,T_n)$ be a commuting tuple of analytic operators on $\Hil$
  such that $T$ is doubly commuting and satisfies one of the following
  properties:\\
   \textup{(a)} $T_i$ is concave for each $i=1,\dots,n$\\
  \textup{(b)} $\norm{T_i x + y}^2 \leq 2
  (\norm{x}^2 + \norm{T_i y}^2 )\
  (x,y \in \Hil, i= 1,2,\dots,n).$\\
  Then for any non-empty subset
 $\alpha=\{\alpha_1,\dots,\alpha_k\}\subseteq \Lambda_n$,
  $\W_{\alpha}$ is a wandering subspace for
  $T_{\alpha}=(T_{\alpha_1},\dots,T_{\alpha_k})$, where $\W_{\alpha}$
 is as in~\eqref{wandering subspace}.
 \end{cor}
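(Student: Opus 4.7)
The plan is to reduce Corollary \ref{main2} to Theorem \ref{main} by verifying that, under either hypothesis (a) or (b), each $T_i$ satisfies the hypothesis of Theorem \ref{main}: namely, that for every reducing subspace $\s_i$ of $T_i$, the space $\s_i\ominus T_i\s_i$ is a wandering subspace for $T_i|_{\s_i}$. Once this is established, Theorem \ref{main} immediately yields the conclusion that $\W_\alpha=\bigcap_{i=1}^k(\Hil\ominus T_{\alpha_i}\Hil)$ is wandering for $T_\alpha$ for every non-empty $\alpha\subseteq\Lambda_n$.

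First I would fix $i\in\Lambda_n$ and a reducing subspace $\s_i$ of $T_i$, and set $R_i:=T_i|_{\s_i}\in B(\s_i)$. The goal is to apply Theorem \ref{shimorin} to $R_i$ on the Hilbert space $\s_i$. For this, three things must be checked: (1) $R_i$ is analytic; (2) $R_i$ satisfies property (i) or (ii) of Theorem \ref{shimorin}; and (3) the resulting wandering subspace $\s_i\ominus R_i\s_i$ equals $\s_i\ominus T_i\s_i$. Point (3) is immediate because $\s_i$ is reducing for $T_i$, so $R_i\s_i=T_i\s_i$.

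For analyticity, since $\s_i$ is in particular $T_i$-invariant, $R_i^m\s_i=T_i^m\s_i\subseteq T_i^m\Hil$ for all $m\ge 0$, and analyticity of $T_i$ gives
\[
\bigcap_{m=0}^\infty R_i^m\s_i\;\subseteq\;\bigcap_{m=0}^\infty T_i^m\Hil=\{0\},
\]
so $R_i$ is analytic. For the Shimorin inequalities, the inequalities in (a) and (b) are pointwise identities in the Hilbert space variables, so restricting the vectors $x,y$ to lie in $\s_i$ preserves them verbatim: if $T_i$ is concave on $\Hil$, then $R_i$ is concave on $\s_i$; and if $\|T_i x+y\|^2\le 2(\|x\|^2+\|T_i y\|^2)$ for all $x,y\in\Hil$, then the same inequality holds for all $x,y\in\s_i$ with $T_i$ replaced by $R_i$. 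Hence Theorem \ref{shimorin} applies to $R_i$, giving that $\s_i\ominus T_i\s_i=\s_i\ominus R_i\s_i$ is a wandering subspace for $R_i=T_i|_{\s_i}$.

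With the hypothesis of Theorem \ref{main} verified for each $i=1,\dots,n$, and doubly commutativity given by assumption, Theorem \ref{main} delivers the conclusion that for every non-empty $\alpha=\{\alpha_1,\dots,\alpha_k\}\subseteq\Lambda_n$, the tuple $T_\alpha$ has the wandering subspace property with wandering subspace $\W_\alpha=\bigcap_{i=1}^k(\Hil\ominus T_{\alpha_i}\Hil)$, as defined in~\eqref{wandering subspace}. There is no serious obstacle here; the only substantive point is the trivial but essential observation that Shimorin's hypotheses are hereditary to reducing subspaces, which bridges the single-operator result (Theorem \ref{shimorin}) to the general assumption required by Theorem \ref{main}.
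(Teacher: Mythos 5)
Your proposal is correct and follows essentially the same route as the paper: both reduce to Theorem~\ref{main} by observing that conditions (a) and (b) (and analyticity) are inherited by the restriction of $T_i$ to any reducing subspace, so that Theorem~\ref{shimorin} applies to $T_i|_{\s_i}$. Your write-up simply spells out the details (analyticity of the restriction and $R_i\s_i = T_i\s_i$) that the paper leaves implicit.
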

 \begin{proof}
  Note that if $T$ satisfies condition (a) or 
  (b) then for any $1\le i\le n$ and reducing subspace $\s_i$ of $T_i$,
  $T_i|_{\s_i}$ also satisfies 
  condition (a) or (b) respectively. Thus by Theorem~\ref{shimorin},
  $T$ satisfies the hypothesis of the above theorem and the proof follows.
 \end{proof}

  We end this section by proving kind of converse of the above result
   and part of which is a generalization of
  \cite{RT}, Theorem 3.
 \begin{thm}
 \label{main1}
 Let $T = (T_1,\dots,T_n)$ be an $n$-tuple of commuting
 operators on $\Hil$ with the property
 \[\norm{T_i x + y}^2 \leq 2
  (\norm{x}^2 + \norm{T_i y}^2 )\
  (x,y \in \Hil, i= 1,2,\dots,n)
  \]
  or $T_i$ is concave for all $i=1,\dots,n$.
  Then,\\
  \textup{ (i)} $T$ is doubly commuting on $\Hil$, and\\
  \textup{(ii)}  $T_i$ is analytic for all
  $i= 1,2,\dots, n$\\
  if and only if \\
 \textup{(a)} for any non-empty subset
 $\alpha=\{\alpha_1,\dots,\alpha_k\}\subseteq \Lambda_n$, $\W_{\alpha}$
 is a wandering subspace for $T_{\alpha}=(T_{\alpha_1},\dots,T_{\alpha_k})$
  and for $k\ge 2,\ [\mathcal{W}_{\alpha}]_{T_{\alpha_i}}=
  \mathcal{W}_{\alpha\setminus\{\alpha_i\}}$ for all $\alpha_i\in\alpha$,\\
 \textup{(b)} $T_i$ commutes with $T_j^* T_j$ for all $1\leq i < j \leq n.$
  \end{thm}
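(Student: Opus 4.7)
The plan is to establish the two directions of the biconditional separately, with the interesting content lying in the reverse direction. The forward direction, (i) and (ii) $\Rightarrow$ (a) and (b), essentially repackages earlier results: Corollary~\ref{main2} directly gives that $\W_\alpha$ is wandering for $T_\alpha$, while the auxiliary identity $[\W_\alpha]_{T_{\alpha_i}} = \W_{\alpha \setminus \{\alpha_i\}}$ is precisely the intermediate step derived inside the proof of Theorem~\ref{main}. For (b), a one-line calculation using double commutativity and commutativity yields $T_i(T_j^*T_j) = (T_iT_j^*)T_j = (T_j^*T_i)T_j = T_j^*(T_jT_i) = (T_j^*T_j)T_i$.

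For the reverse direction I assume (a) and (b), and treat doubly commutativity and analyticity in turn. For doubly commutativity (i), I would consider the commutator $C := T_iT_j^* - T_j^*T_i$ and show it vanishes on both $T_j\Hil$ and $\W_j$. The identity $CT_j = 0$ follows immediately from (b) combined with $T_iT_j = T_jT_i$. Next, for $w \in \W_j = \Ker T_j^*$ one has $T_iT_j^*w = 0$ automatically, while to get $T_j^*T_iw = 0$ it suffices to check $T_iw \in \W_j$; here the identity $[\W_{\{i,j\}}]_{T_i} = \W_j$ from (a) exhibits $\W_j$ as a $T_i$-invariant subspace, so $T_i\W_j \subseteq \W_j$. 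Under either norm hypothesis each $T_j$ is bounded below (set $x = 0$ in the Shimorin inequality, or use concavity), hence $T_j\Hil$ is closed and $\Hil = T_j\Hil \oplus \W_j$ orthogonally, forcing $C \equiv 0$.

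For analyticity (ii), the singleton case $\alpha = \{i\}$ in (a) yields $\Hil = [\W_i]_{T_i}$, and the task is to deduce $\bigcap_{m \geq 0} T_i^m\Hil = \{0\}$. The natural tool here is Shimorin's Wold-type decomposition from \cite{SH}: under either of the norm conditions, $T_i$ induces an orthogonal splitting $\Hil = [\W_i]_{T_i} \oplus \bigcap_{m \geq 0} T_i^m\Hil$, so if the first summand already exhausts $\Hil$ the second must be trivial. This is the converse companion to Theorem~\ref{shimorin}.

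The main obstacle is this final analyticity step: Theorem~\ref{shimorin} as stated in the excerpt only supplies \emph{analytic} $\Rightarrow$ \emph{wandering}, whereas we need the reverse implication, so an appeal to the underlying Wold-type decomposition in \cite{SH} is unavoidable. Once that is invoked, the rest of the reverse direction reduces to the direct algebraic manipulations given above.
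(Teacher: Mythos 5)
Your proposal is correct, and most of it runs parallel to the paper's own argument: the forward direction is, as you say, Corollary~\ref{main2} together with the identity $[\W_{\alpha}]_{T_{\alpha_i}}=\W_{\alpha\setminus\{\alpha_i\}}$ extracted from the proof of Theorem~\ref{main} (plus the one-line computation for (b), which the paper leaves implicit), and for analyticity both you and the paper invoke Shimorin's Wold-type decomposition $\Hil=[\W_i]_{T_i}\oplus\bigcap_{m\ge 1}T_i^m\Hil$ and use $[\W_i]_{T_i}=\Hil$ from (a) to kill the second summand --- you are right that Theorem~\ref{shimorin} as stated is not enough and the full decomposition from \cite{SH} must be cited, which is exactly what the paper does. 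The one place you genuinely diverge is the proof of double commutativity. The paper approximates an arbitrary $x\in\Hil$ by finite sums $x_k=\sum_{m=0}^{N_k}T_i^mx_{m,k}$ with $x_{m,k}\in\W_i$ (using $[\W_i]_{T_i}=\Hil$) and verifies $T_i^*T_jx_k=T_jT_i^*x_k$ termwise via (b) and the $T_j$-invariance of $\W_i$. You instead show the commutator $C=T_iT_j^*-T_j^*T_i$ annihilates $\ran T_j$ (from (b) and commutativity) and $\Ker T_j^*=\W_j$ (from the $T_i$-invariance of $\W_j$ supplied by (a)), and conclude from $\Hil=\overline{\ran T_j}\oplus\Ker T_j^*$. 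Your route is cleaner: it replaces the approximation argument by an orthogonal decomposition that holds for any bounded operator (so the bounded-below/closed-range remark, while true, is not even needed once $C$ is seen to vanish on $\overline{\ran T_j}$ by continuity), and it uses only the invariance $T_i\W_j\subseteq\W_j$ rather than the density statement $[\W_i]_{T_i}=\Hil$. Both arguments consume hypotheses (a) and (b) in the same roles, so nothing is gained in generality, only in economy.
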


 \begin{proof}
 The forward direction follows from Theorem~\ref{main}.

  For the converse, suppose that (a) and (b) hold. To show (i), let
  $1\le i\le n$ be fixed. By assumption (a),
  $\mathcal{W}_i=[\mathcal{W}_{\{i,j\}}]_{T_j}$ for all $1\le i\neq j\le n$.
   This shows that $\mathcal{W}_i$ is $T_j$ invariant subspace for all $1\le j\neq i\le n$.
 Let $x \in \Hil.$
 Since $[\W_i]_{T_i}=\Hil$ then there exists
  a sequence $x_k$ converging to $x$ such that $$x_k = \sum_{m=0}^{N_k} T_i^m x_{m,k},$$
  where each $N_k$ is a nonnegative integer and $x_{m,k}$ is a member of $\mathcal{W}_i.$
 Now for any $1\le j\neq i\le n$ we have,
 \begin{equation*}
 T_i^{*} T_j x_k =\sum_{m=0}^{N_k} T_i^{*}T_j T_i^{m} x_{m,k}
  =  \sum_{m=1}^{N_k} T_i^{*}T_j T_i^{m} x_{m,k}.
 \end{equation*}
 On the other hand,
 \begin{align*}
 T_j T_i^* x_k = \sum_{m=0}^{N_k} T_j T_i^* T_i^m x_{m,k} = \sum_{m=1}^{N_k}  T_j T_i^* T_i^m x_{m,k}
= \sum_{m=1}^{N_k} T_i^* T_j T_i^m x_{m,k},
 \end{align*}where the last equality follows from (b). So $T_i^* T_j x_k = T_j T_i^*x_k$ and by taking limit
 $T_i^* T_j x = T_j T_i^*x$. Thus we have (i).

\noindent Finally, by Theorem $3.6$ of~\cite{SH} we have that
  $$\Hil = \left[\mathcal{W}_i\right]_{T_i} \oplus \bigcap_{m=1}^\infty T_i^m\Hil,$$
  for all $1\le i\le n$. By part (a), $[\mathcal{W}_i]_{T_i}=\Hil$ for all
  $1\le i\le n$. Thus $\bigcap\limits_{m=1}^\infty T_i^m\Hil=\{0\}$
  for all $1\le i\le n$ and this completes the proof.
 \end{proof}
 \section{Wandering subspaces for invariant subspaces of $A^2(\D^n)$ and $\mathcal{D}(\D^n)$ }
 In this section we apply the general theorem proved in the previous
 section to obtain wandering subspaces for invariant subspaces of
 the Bergman space and the Dirichlet space over polydisc.

 The Bergman space over $\D^n$ is denoted by $A^2(\D^n)$
  and defined by
 $$A^2 (\mathbb{D}^n) =
 \{ f \in \mathcal{O}(\mathbb{D}^n): \int_{\D^n}\vert f(\mathbf{z})\vert^2
 d\mathbf{A}(\mathbf{z})<\infty\},$$ \\
 where $d\mathbf{A}$ is the product area measure on $\D^n$.

  Below for any invariant subspace $\s$ of $A^2(\D^n)$ and non-empty set
  $\alpha=\{\alpha_1,\dots,\alpha_k\}\subseteq\Lambda_n$,
  we denote by $\mathcal{W}_{\alpha}^{\s}$ the following set:
  \[\mathcal{W}^\s_{\alpha}:=\bigcap_{i=1}^k(\s\ominus z_{\alpha_i}\s).\]
  We denote by $M=(M_{z_1},\dots,M_{z_n})$ the $n$-tuple of co-ordinate
  multiplication operators on $A^2(\D^n)$.

 \begin{thm}
 Suppose $\s$ is a closed joint $M$-invariant subspace
 of $A^2(\mathbb{D}^n)$.
 If $\s$ is doubly commuting then
  for any non-empty
 subset $\alpha=\{\alpha_1,\dots,\alpha_k\}$ of $\Lambda_n$, $\W^\s_{\alpha}$
 is a wandering subspace for
 $M_{\alpha}|_\s=(M_{z_{\alpha_1}}|_\s,\dots,M_{z_{\alpha_k}}|_\s)$.
\end{thm}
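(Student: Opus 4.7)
The plan is to apply Corollary~\ref{main2} directly to the tuple $(M_{z_1}|_\s,\dots,M_{z_n}|_\s)$ on $\s$ with the Shimorin-type hypothesis (b). Doubly commutativity is part of the assumption on $\s$, and commutativity of the restrictions is inherited from the full tuple, so only two conditions remain to verify: analyticity of each restriction, and the inequality $\norm{M_{z_i}|_\s x+y}^2\le 2(\norm{x}^2+\norm{M_{z_i}|_\s y}^2)$ on $\s$.

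Analyticity is essentially automatic. Since $\s\subseteq A^2(\D^n)$ is $M$-invariant, for each $i$ one has
\[
\bigcap_{m=0}^\infty (M_{z_i}|_\s)^m \s \subseteq \bigcap_{m=0}^\infty z_i^m A^2(\D^n)=\{0\},
\]
the last equality because any $f$ lying in every $z_i^m A^2(\D^n)$ must vanish to infinite order in $z_i$ and hence be zero.

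The substantive step is the Shimorin inequality for each $M_{z_i}$ on $A^2(\D^n)$. I would first recall that the single-variable Bergman shift $M_z$ on $A^2(\D)$ satisfies $\norm{M_z x+y}^2\le 2(\norm{x}^2+\norm{M_z y}^2)$; this is the core inequality behind Shimorin's Wold-type decomposition and the Aleman--Richter--Sundberg Beurling theorem for $A^2(\D)$, so I would cite~\cite{SH} (or~\cite{ARS}). To pass to the polydisc, I would use the unitary identification
\[
A^2(\D^n)\;\cong\;A^2(\D)\otimes A^2(\D^{n-1}),
\]
under which $M_{z_1}$ corresponds to $M_z\otimes I$ (and analogously for the other coordinates after permuting tensor factors). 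Choosing an orthonormal basis $\{e_k\}$ of $A^2(\D^{n-1})$ and expanding $x=\sum_k x_k\otimes e_k$, $y=\sum_k y_k\otimes e_k$, the Shimorin inequality decouples across $k$:
\[
\norm{(M_z\otimes I)x+y}^2=\sum_k\norm{M_zx_k+y_k}^2\le \sum_k 2(\norm{x_k}^2+\norm{M_zy_k}^2)=2(\norm{x}^2+\norm{(M_z\otimes I)y}^2).
\]
Thus each $M_{z_i}$ on $A^2(\D^n)$ satisfies (b), and since (b) is a pointwise inequality in $x$ and $y$, it is inherited by the restriction to any invariant subspace, in particular to $\s$.

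With analyticity, doubly commutativity and (b) in hand on $\s$, Corollary~\ref{main2} yields that for every non-empty $\alpha=\{\alpha_1,\dots,\alpha_k\}\subseteq\Lambda_n$, the subspace $\W^\s_\alpha=\bigcap_{i=1}^k(\s\ominus z_{\alpha_i}\s)$ is a wandering subspace for $M_\alpha|_\s$. The main obstacle is really just the Shimorin inequality for the Bergman shift; once it is available in one variable, the tensor-product reduction and the restriction step are essentially formal.
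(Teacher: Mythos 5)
Your proposal is correct, and its overall architecture is exactly the paper's: reduce the theorem to Corollary~\ref{main2} applied to $(M_{z_1}|_\s,\dots,M_{z_n}|_\s)$, note that analyticity and the norm inequality (b) pass to invariant subspaces, and then verify (b) for each $M_{z_i}$ on the full space $A^2(\D^n)$. The only real divergence is in how that last inequality is established. The paper proves it self-containedly: it expands $f=\sum_k \mathbf{a}_k z_i^k$ and $g=\sum_k\mathbf{b}_k z_i^k$ with coefficients $\mathbf{a}_k,\mathbf{b}_k\in A^2(\D^{n-1})$, uses $\norm{f}^2=\sum_k\norm{\mathbf{a}_k}^2/(k+1)$, and invokes the elementary scalar estimate $\frac{|z+w|^2}{k+1}\le 2\bigl(\frac{|z|^2}{k}+\frac{|w|^2}{k+2}\bigr)$ from \cite{DS}. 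You instead cite the one-variable Bergman-shift inequality (from \cite{SH}/\cite{ARS}) and lift it through the unitary $A^2(\D^n)\cong A^2(\D)\otimes A^2(\D^{n-1})$, decoupling over an orthonormal basis of the second factor; this is valid, since $\norm{\sum_k u_k\otimes e_k}^2=\sum_k\norm{u_k}^2$ makes the inequality sum termwise. The two arguments are really the same computation in different clothing --- the paper's coefficient expansion is the tensor decomposition written out, and its scalar inequality is precisely the $n=1$ case of (b) --- so your version is more modular at the cost of outsourcing the one-variable estimate, while the paper's is longer but self-contained. Your analyticity argument (a function in $\bigcap_m z_i^m A^2(\D^n)$ vanishes to infinite order in $z_i$) matches the paper's.
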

 The proof of the above theorem follows if we show the tuple of operators
 $$(M_{z_1}|_{\s}\dots, M_{z_n}|_{\s})$$
 on $\s$ satisfies all the hypothesis of Corollary~\ref{main2}.
 First note that by analyticity of $A^2(\D^n)$, all the co-ordinate
 multiplication operators $M_{z_i}, i=1,\dots,n$ on $A^2(\D^n)$ are analytic. Then
 $M_{z_i}|_{\s}$ is also analytic for all $i=1,\dots,n$.
 Thus the only hypothesis remains to verify is either
 condition (a) or (b) of Corollary~\ref{main2}. In the
 next lemma we show that the tuple $(M_{z_1},\dots,M_{z_n})$ satisfies condition
 (b) and therefore so does $(M_{z_1}|_{\s}\dots, M_{z_n}|_{\s})$.
 \begin{lemma}
 For all $1\leq i \leq n$ the operators
 $$M_{z_i} : A^2 (\mathbb{D}^n) \rightarrow
A^2 (\mathbb{D}^n),\quad f\mapsto
z_i f,$$ satisfies $$\|M_{z_i} f + g\|^2
 _{A^2 (\mathbb{D}^n)}  \leq  2 \left(\|f\|^2_{A^2 (\mathbb{D}^n)}
 + \|M_{z_i} g\|^2 _{A^2 (\mathbb{D}^n)}\right),$$ for all
 $f,g \in A^2 (\mathbb{D}^n).$
\end{lemma}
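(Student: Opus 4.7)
The plan is to reduce the $n$-variable inequality to the one-variable version on the disc and then lift it to the polydisc by Fubini's theorem.

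First, I would verify the inequality for $n=1$: for every $f,g \in A^2(\D)$,
\[
\|M_z f + g\|^2_{A^2(\D)} \le 2\bigl(\|f\|^2_{A^2(\D)} + \|M_z g\|^2_{A^2(\D)}\bigr).
\]
Writing Taylor expansions $f(z)=\sum_{k\ge 0} a_k z^k$ and $g(z)=\sum_{k\ge 0} b_k z^k$ and using the orthogonality of $\{z^k\}_{k\ge 0}$ in $A^2(\D)$ with $\|z^k\|^2$ proportional to $1/(k+1)$, the inequality reduces (after cancelling a common $|b_0|^2$ contribution) to the term-by-term estimate
\[
\frac{|a_m + b_{m+1}|^2}{m+2} \le \frac{2|a_m|^2}{m+1} + \frac{2|b_{m+1}|^2}{m+3} \qquad (m\ge 0),
\]
which follows from the elementary weighted Young inequality $|u+v|^2 \le (1+t)|u|^2 + (1+1/t)|v|^2$ applied with the sharp choice $t=(m+3)/(m+1)$. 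Alternatively, this one-variable estimate is contained in Shimorin's classical results \cite{SH} for the Bergman shift.

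Next, I would lift this from the disc to the polydisc via Fubini. Fix $i \in \Lambda_n$ and denote $\hat{z}=(z_1,\ldots,z_{i-1},z_{i+1},\ldots,z_n) \in \D^{n-1}$. For any $h \in A^2(\D^n)$,
\[
\|h\|^2_{A^2(\D^n)} \;=\; \int_{\D^{n-1}} \|h(\,\cdot\,, \hat{z})\|^2_{A^2(\D)}\, dA(\hat{z}),
\]
where $h(\,\cdot\,, \hat{z})$ denotes the one-variable holomorphic slice in the variable $z_i$, which lies in $A^2(\D)$ for almost every $\hat{z}\in\D^{n-1}$. Applying the one-variable inequality of the previous step to the slices $f(\,\cdot\,, \hat{z})$ and $g(\,\cdot\,, \hat{z})$ for a.e. $\hat{z}$ and then integrating over $\hat{z}\in\D^{n-1}$, Fubini once more produces the desired inequality in $A^2(\D^n)$.

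The main obstacle is the one-variable case; the Fubini lift is routine once that is in hand. Since the one-variable estimate reduces to a weighted Young inequality applied coefficient-by-coefficient with the explicit optimal parameter $t=(m+3)/(m+1)$, no serious difficulty arises, and the whole argument goes through cleanly.
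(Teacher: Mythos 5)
Your proof is correct and is essentially the paper's argument in different packaging: the paper expands $f$ and $g$ directly in powers of $z_i$ with coefficients in $A^2(\mathbb{D}^{n-1})$ (which is your Fubini/slice reduction carried out in one step) and then applies the identical term-by-term estimate $\frac{|u+v|^2}{k+1}\le 2\bigl(\frac{|u|^2}{k}+\frac{|v|^2}{k+2}\bigr)$, obtained there from $|(k+2)u-kv|^2\ge 0$, which is exactly your weighted Young inequality with the optimal parameter $t=(k+2)/k$. Nothing further is needed.
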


 Before we prove this lemma, we recall a well known fact regarding the norm
 of a function in $A^2 (\mathbb{D}^n)$ and prove an inequality.
 If $f$ is in $A^2 (\mathbb{D}^n)$
 with the following power series expansion corresponding to $i$-th variable:
 $$f(z_1,\cdots,z_n) = \sum\limits_{k = 0} ^\infty \mathbf{a}_k
 z_i^{k},$$
  where $\mathbf{a}_k\in A^2(\D^{n-1})$ for all $k\in\Nat$,
  then $$ \| f \| ^2 _{A^2 (\mathbb{D}^n)}
 =  \sum\limits_{k = 0} ^\infty  \frac{\norm{\mathbf{a}_{k}}_{A^2(\D^{n-1})}^2 }{(k +1)}.$$

  Next we prove the following inequality (can be found in ~\cite{DS}, page 277,
  we include the proof for completeness) for any $z,w\in\Comp$ and $k\in\Nat\setminus\{0\}$,
  \begin{equation}
   \label{inequality}
   \frac{|z+w|^2}{k+1}\le 2\left(\frac{|z|^2}{k}+\frac{|w|^2}{k+2}\right).
  \end{equation}
  To this end, note that $2k(k+2)\R(z\bar{w})\le (k+2)^2|z|^2+k^2|w|^2$, which follows from
  the inequality $|(k+2)z-kw|^2\ge 0$. Now
  \begin{align*}
   \frac{|z+w|^2}{k+1}&= \frac{|z|^2+|w|^2+2\R(z\bar{w})}{k+1}\\
   &\le  \frac{|z|^2+|w|^2}{k+1}+ \frac{|z|^2(k+2)/k+|w|^2k/(k+2)}{k+1}\\
   &=2\left(\frac{|z|^2}{k}+\frac{|w|^2}{k+2}\right).
  \end{align*}

 Now we prove the lemma.
 \begin{proof}
 Let $1\le i\le n$ be fixed. Let
  $f(z_1,\cdots,z_n) = \sum\limits_{k = 0} ^\infty
  \mathbf{a}_{k}
 z_i^{k} $  and
 $g(z_1,\cdots,z_n) = \sum\limits_{k = 0} ^\infty
  \mathbf{b}_{k}
 z_i^{k}$ be the power series expansions of two functions
 $f,g\in A^2(\D^n)$ with respect to $z_i$-th variable, where
 $\mathbf{a}_k, \mathbf{b}_k\in A^2(\D^{n-1})$ for all $k\in\Nat$.
  Then
 \begin{align*}
  (M_{z_i}f+g)=\sum_{k= 0} ^\infty
  \mathbf{a}_{k}
 z_i^{k+1 }+ \sum_{k=0}^{\infty}\mathbf{b}_k z_i^{k}
 = \mathbf{b}_0+ \sum_{k=1} ^\infty (\mathbf{a}_{k-1}+\mathbf{b}_k) z_i^k.
 \end{align*}
  Now
  \begin{align*}
 \|M_{z_i}f+g \| ^2
  _{A^2 (\mathbb{D}^n)} &= \|\mathbf{b}_0\|^2_{A^2(\D^{n-1})}+\sum_{k = 1} ^\infty
  \frac{\|\mathbf{a}_{k-1}+\mathbf{b}_k\|_{A^2(\D^{n-1})}^2}{(k+1)}\\
  &\le  \|\mathbf{b}_0\|^2_{A^2(\D^{n-1})}+ 2\sum_{k=1}^\infty \left(
  \frac{\|\mathbf{a}_{k-1}\|^2_{A^2(\D^{n-1})}}{k}+
  \frac{ \|\mathbf{b}_k\|^2_{A^2(\D^{n-1})}}{k+2}\right)\ (\text{ by}~\eqref{inequality})\\
  &= 2 \left(\sum_{k=0}^\infty  \frac{\|\mathbf{a}_k\|^2_{A^2(\D^{n-1})}}{k+1} +
  \sum_{k=1}^\infty \frac{\|\mathbf{b}_{k-1}\|^2_{A^2(\D^{n-1})}}{k+1}\right)\\
  &= 2 \left(\|f\|^2_{A^2 (\mathbb{D}^n)}
 + \|M_{z_i} g\|^2 _{A^2 (\mathbb{D}^n)}\right).
  \end{align*}
   Thus the result.
   \end{proof}
 Now we turn our discussion to the Dirichlet space over polydisc.
 The Dirichlet space over $\D$ is denoted by $\mathcal{D}(\D)$ and defined
 by
 \[
  \mathcal{D}(\D):=\{f\in \mathcal{O}(\D): f'\in A^2(\D)\}.
 \]
 For any $f=\sum\limits_{k=0}^\infty a_kz^k\in\mathcal{D}(\D)$, $\norm{f}_{\mathcal{D}}^2=
 \sum\limits_{k=0}^\infty (k+1)\vert a_k\vert^2$. The Dirichlet space over
 $\D^n$ is denoted by $\mathcal{D}(\D^n)$ and defined by
 $$\mathcal{D}(\D^n):=\underbrace{\mathcal{D}(\D)\ot\cdots\ot\mathcal{D}(\D)}
 \limits_{\text{n-times}}.$$
 Another way of expressing $\mathcal{D}(\D^n)$ is the following
 \[
  \mathcal{D}(\D^n):=\{f\in \mathcal{O}\big(\D; \mathcal{D}(\D^{n-1})\big):
  f=\sum_{k=0}^\infty \mathbf{a}_k z^k,
  \sum_{k=0}^\infty (k+1)\norm{\mathbf{a}_k}
  _{\mathcal{D}(\D^{n-1})}^2
  <\infty\}.
 \]
 In the above, $\mathbf{a}_k\in\mathcal{D}(\D^{n-1})$ for all $k\in\Nat$.
 The co-ordinate multiplication operators on $\mathcal{D}(\D^n)$ are also denoted by
 $M_{z_i}$, $i=1,\dots,n$. Since $\mathcal{D}(\D^n)$ contains
 holomorphic functions on $\D^n$ then $M_{z_i}$ is analytic for
 all $i=1,\dots,n$. Let $1\le i\le n$ be fixed. Now for $f\in \mathcal{D}(\D^n)$,
 let $f=\sum\limits_{k=0}^\infty \mathbf{a}_k z_i^k$ be the Taylor expansion
 of $f$ corresponding to $z_i$-th variable, where $\mathbf{a}_k\in \mathcal{D}(\D^{n-1})$
 for all $k\in\Nat$. Then $\norm{f}^2=
 \sum\limits_{k=0}^\infty(k+1)\norm{\mathbf{a}_k}_{\mathcal{D}(\D^{n-1})}^2$
 and
 \begin{align*}
  \norm{M_{z_i}^2f}^2+\norm{f}^2&=
  \sum_{k=2}^\infty (k+1)\norm{\mathbf{a}_{k-2}}_{\mathcal{D}(\D^{n-1})}^2
  +\sum_{k=0}^\infty(k+1)\norm{\mathbf{a}_k}_{\mathcal{D}(\D^{n-1})}^2\\
  &=\sum_{k=0}^{\infty}(k-1)\norm{\mathbf{a}_k}_{\mathcal{D}(\D^{n-1})}^2
  +\sum_{k=0}^\infty(k+1)\norm{\mathbf{a}_k}_{\mathcal{D}(\D^{n-1})}^2\\
  &=2\sum_{k=1}^{\infty}k\norm{\mathbf{a}_k}_{\mathcal{D}(\D^{n-1})}^2\\
  &=2\norm{M_{z_i}f}^2.
 \end{align*}
 Therefore $M_{z_i}$ is concave for all $i=1,\dots,n$. Thus
 again by Corollary~\ref{main2}, we have proved the following result
 of wandering subspaces for invariant subspaces of Dirichlet space
 over polydisc.
 \begin{thm}
 Suppose $\s$ is a closed joint $(M_{z_1},\dots,M_{z_n})$-invariant subspace
 of $\mathcal{D}(\D^n)$.
 If $\s$ is doubly commuting then
  for any non-empty
 subset $\alpha=\{\alpha_1,\dots,\alpha_k\}$ of $\Lambda_n$, $\W^\s_{\alpha}$
 is a wandering subspace for
 $M_{\alpha}|_\s=(M_{z_{\alpha_1}}|_\s,\dots,M_{z_{\alpha_k}}|_\s)$,
  where
  \[
   \W^\s_{\alpha}=\bigcap_{i=1}^k(\s\ominus z_{\alpha_i}\s).
  \]

 \end{thm}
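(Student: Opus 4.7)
The plan is to check that the restricted tuple $(M_{z_1}|_\s,\dots,M_{z_n}|_\s)$ on $\s$ satisfies all the hypotheses of Corollary~\ref{main2}, and then invoke that corollary directly with $\alpha$ as in the statement. Since the corollary requires the tuple to be (i) commuting, (ii) doubly commuting, (iii) composed of analytic operators, and (iv) satisfying either the concavity condition (a) or the norm inequality (b), there are essentially three boxes to tick: doubly commutativity, analyticity, and concavity of the restrictions.

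Doubly commutativity is exactly the standing hypothesis on $\s$, so that is free. For analyticity, I would observe that any $f\in\mathcal{D}(\D^n)$ admits a Taylor expansion $f=\sum_{k\ge 0}\mathbf{a}_k z_i^k$ with $\mathbf{a}_k\in\mathcal{D}(\D^{n-1})$, so that $f\in z_i^m\mathcal{D}(\D^n)$ forces $\mathbf{a}_0=\cdots=\mathbf{a}_{m-1}=0$; hence $\bigcap_{m\ge 0}M_{z_i}^m\mathcal{D}(\D^n)=\{0\}$, and since $\bigcap_{m\ge 0}(M_{z_i}|_\s)^m\s\subseteq \bigcap_{m\ge 0}M_{z_i}^m\mathcal{D}(\D^n)$, the restriction $M_{z_i}|_\s$ inherits analyticity.

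For the concavity box, I would lean on the computation carried out just before the theorem, which in fact establishes the equality
\[
\norm{M_{z_i}^2 f}^2+\norm{f}^2=2\norm{M_{z_i}f}^2\qquad (f\in\mathcal{D}(\D^n)),
\]
for each $1\le i\le n$. Since this identity holds for every element of the ambient space, it holds a fortiori for every $f\in\s$, so $M_{z_i}|_\s$ is concave; thus condition (a) of Corollary~\ref{main2} is met. Applying the corollary to the tuple $M|_\s$ then yields, for every non-empty $\alpha=\{\alpha_1,\dots,\alpha_k\}\subseteq\Lambda_n$, that
\[
\W^\s_\alpha=\bigcap_{i=1}^{k}\bigl(\s\ominus z_{\alpha_i}\s\bigr)
\]
is a wandering subspace for $M_\alpha|_\s$, which is precisely the conclusion.

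Since each of the three verifications is essentially routine once the earlier framework is in place, I do not expect any real obstacle; the only point that requires a moment's care is making sure that analyticity and concavity genuinely pass from the ambient space to the invariant subspace $\s$, which they do because both are formulated as inclusions or inequalities that are trivially preserved under restriction to a closed invariant subspace. The conceptual work has already been done in Section~2 and in the preceding concavity computation; this theorem is the Dirichlet analogue obtained by plugging those ingredients into Corollary~\ref{main2}.
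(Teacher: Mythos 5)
Your proposal is correct and follows essentially the same route as the paper: verify that each $M_{z_i}|_\s$ is analytic and concave (the latter via the equality $\norm{M_{z_i}^2 f}^2+\norm{f}^2=2\norm{M_{z_i}f}^2$ computed on the ambient space, which restricts to $\s$), note that double commutativity is the standing hypothesis, and invoke Corollary~\ref{main2}. No substantive differences.
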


We conclude the paper with the remark that since the hypothesis of
Corollary~\ref{main2} and Theorem~\ref{main1} are same then the same
 conclusion as in Theorem~\ref{main1} holds for invariant subspaces
 of Bergman space or Dirichlet space over polydisc.

\vspace{0.2in}

 \noindent\textbf{Acknowledgment:} First two authors are grateful to Indian
  Statistical Institute, Bangalore Centre for warm hospitality.
  The fourth author was supported by UGC Centre for Advanced Study.

\vspace{.2in}

\end{document}